\documentclass[11pt,reqno]{amsart}
\setlength{\voffset}{-.25in}
\usepackage{amssymb,latexsym}
\usepackage{graphicx}
\usepackage{url}		

\usepackage[low-sup]{subdepth}

\textwidth=6.175in
\textheight=9.0in
\headheight=13pt
\calclayout

\newcommand{\N}{{\mathbb N}}
\newcommand{\Z}{{\mathbb Z}}


\newtheorem{theorem}{Theorem}
\newtheorem{lemma}[theorem]{Lemma}
\newtheorem{corollary}[theorem]{Corollary}
\newtheorem*{conjecture}{Conjecture}
\newtheorem*{walltheorem}{Theorem}

\theoremstyle{definition}

\theoremstyle{remark}

\newcommand\fix{{\rm{Fix}}}
\newcommand\orbit{{\rm{Orb}}}
\newcommand\divides{{\mathchoice{\mathrel{\bigm|}}{\mathrel{\bigm|}}{\mathrel{|}}%
       {\mathrel{|}}}}
\newcommand\smalldivides{\mathrel{\kern-2pt\kern3.5pt|}}
\newcommand\notdivides{\mathrel{\kern-3pt\not\!\kern4.3pt\bigm|}}
\newcommand\smallnotdivides{\mathrel{\kern-2pt\not\!\kern3.5pt|}}
\def\le{\leqslant}
\def\ge{\geqslant}
\DeclareMathOperator\trace{Trace}

\begin{document}

\title{Fibonacci along even powers is (almost) realizable}
\author{Patrick Moss}
\address{School of Mathematics, University of East Anglia,
Norwich NR4 7TJ, U.K.}
\email{pbsmoss2@btinternet.com}
\thanks{}
\author{Tom Ward}
\address{School of Mathematics, University of Leeds,
Leeds LS2 9JT, U.K.}
\email{t.b.ward@leeds.ac.uk}

\begin{abstract}
An integer sequence is called realizable
if it is the count of periodic points of
some map. The Fibonacci sequence~$(F_n)$
does not have this property, and the Fibonacci
sequence sampled along the squares~$(F_{n^2})$
also does not have this property. We
prove that this is an
arithmetic phenomenon related to the discriminant
of the Fibonacci sequence, by showing that
the sequence~$(5F_{n^2})$ is realizable.
More generally, we show that~$(F_{n^{2k-1}})$
is not realizable in a particularly strong sense
while~$(5F_{n^{2k}})$ is realizable,
for any~$k\ge1$.
\end{abstract}

\maketitle

\section{Introduction}\label{sectionIntroduction}

Counting periodic points for iterates of maps
provides a natural source of integer sequences.
For example, the `golden mean' shift map~$\sigma$ on
\[
\Sigma=\{x\in\{0,1\}^{\Z}\mid(x_{k},x_{k+1})\neq(1,1)\}
\]
defined by~$\sigma\colon(x_n)_{n\in\Z}\mapsto(x_{n+1})_{n\in\Z}$
has
\[
\fix_n(\sigma)=\{x\in\Sigma\mid\sigma^n(x)=x\}
=
\trace\begin{pmatrix}{1\thinspace}{1}\\{1\thinspace}{0 }\end{pmatrix}^n=L_n
\]
for all~$n\in\N$,
giving the Lucas sequence~$(L_n)=(1,3,4,\dots)$.
The sequences we discuss start with the first term
(not the zeroth), and we use~$U=(U_n)$
for a sequence~$(U_n)_{n\in\N}$
where~$\N=\{1,2,3,\dots\}$.

We call an integer sequence~$(U_n)_{n\in\N}$ `realizable'
if there is some map~$T\colon X\to X$
with the property that~$\fix_n(T)=U_n$ for all~$n\ge1$.
Puri and Ward~\cite{MR1866354} proved that the golden mean shift map
illustrates a uniqueness phenomenon, by showing
that if~$(U_n)$ is an integer sequence satisfying the
Fibonacci recurrence~$U_{n+2}=U_{n+1}+U_{n}$ for all~$n\ge1$
with~$U_1=a$ and~$U_2=b$, then~$(U_n)$ is realizable
if and only if~$b=3a$ (meaning that~$(U_n)$ is
an integer multiple of the Lucas sequence~$(L_n)$).
In order to explain a more general
setting within which this is a special case,
we recall that a sequence~$(U_n)$ is realizable
if and only if it satisfies two conditions:
\begin{enumerate}
\item the Dold condition~\cite{MR724012}
that~$\sum_{d\smalldivides n}\mu\bigl(\tfrac{n}{d}\bigr)U_d
\equiv0$ modulo~$n$ for all~$n\in\N$, and
\item the sign condition~$\sum_{d\smalldivides n}\mu\bigl(\tfrac{n}{d}\bigr)U_d\ge0$
for all~$n\in\N$.
\end{enumerate}
All this means is that~$\fix_T(n)=U_n$ for all~$n\ge1$
if and only if the number of closed orbits of
length~$n$ under~$T$ is given by~$\orbit_T(n)
=\frac{1}{n}\sum_{d\smalldivides n}\mu\bigl(\tfrac{n}{d}\bigr)U_d$,
and so it must be the case that~$\orbit_T(n)$
is a non-negative integer
for all~$n\in\N$.
Minton~\cite{MR3195758} showed that any linear recurrence sequence
satisfying the Dold condition must be a sum of traces of
powers of algebraic numbers (and, in the case of binary
recurrences, the sign condition is easily understood). This
recovers the result of~\cite{MR1866354}, and much else
besides.

The Fibonacci sequence itself,~$(F_n)=(1,1,2,3,\dots)$,
is not realizable. Indeed, it fails the Dold condition in the following
strong---and, in the sense of Theorem~\ref{theoremFiveFibonacciSquared},
irreparable---way.

\begin{lemma}\label{lemmaInfinitelyManyPrimeDenominators}
The set of primes dividing a denominator
of~$\frac{1}{n}\sum_{d\smalldivides n}\mu\bigl(\tfrac{n}{d}\bigr)F_d$
for some~$n\in\N$ is infinite.
\end{lemma}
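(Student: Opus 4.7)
The plan is to exhibit infinitely many primes~$p$ that actually appear in a denominator, by restricting to $n=p$ prime. For such~$n$ the M\"obius sum collapses to
\[
\sum_{d\smalldivides p}\mu\bigl(\tfrac{p}{d}\bigr)F_d=F_p-F_1=F_p-1,
\]
so the question becomes: for which primes~$p$ does $p$ fail to divide $F_p-1$?

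The standard tool is the classical congruence $F_p\equiv\left(\tfrac{p}{5}\right)\pmod p$ for every odd prime $p\neq5$, with $\left(\tfrac{p}{5}\right)$ the Legendre symbol. This follows from the Binet formula reduced modulo~$p$ (equivalently, from Frobenius acting on the ring of integers of $\Q(\sqrt5)$, where Fermat's little theorem gives $\phi^p\equiv\phi$ or $\bar\phi$ according to how $p$ factors). It yields $F_p-1\equiv\left(\tfrac{p}{5}\right)-1\pmod p$, and in particular $F_p-1\equiv-2\pmod p$ whenever~$5$ is a non-residue modulo~$p$, i.e.\ whenever $p\equiv\pm2\pmod 5$. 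For any such $p>2$, the prime~$p$ does not divide $F_p-1$, so $p$ survives as a factor of the denominator of $\tfrac1p(F_p-1)$ once that fraction is put in lowest terms. Dirichlet's theorem on primes in arithmetic progressions then supplies infinitely many primes $p\equiv 2\pmod 5$, each contributing a distinct prime to the denominator set, and the lemma follows.

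There is no serious obstacle here; the only substantive ingredient is the Fibonacci--Legendre congruence, and every step is a one-line consequence. What is worth flagging is the conceptual content: the argument succeeds precisely because the non-squareness of~$5$ modulo half of all primes is what breaks the Dold condition in this strong way---which anticipates why multiplying by the discriminant~$5$ of $x^{2}-x-1$ is the natural repair developed in Theorem~\ref{theoremFiveFibonacciSquared}.
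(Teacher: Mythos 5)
Your proof is correct and follows essentially the same route as the paper: restrict to $n=p$ prime, invoke the congruence $F_p\equiv\bigl(\tfrac{p}{5}\bigr)\pmod p$, and conclude that every odd prime $p\equiv\pm2\pmod 5$ appears as a denominator of $\tfrac1p(F_p-1)$. The appeal to Dirichlet's theorem is a harmless extra flourish the paper leaves implicit.
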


\begin{proof}
We recall that~$F_p$ is equivalent modulo~$p$
to the Legendre symbol~$\left(\frac{p}{5}\right)$
for any prime~$p$
(see Ribenboim~\cite[Eq.~(IV.13), p.~60]{MR1377060}
or Lemmermeyer~\cite[Ex.~2.25, p.~73]{MR1761696}).
It follows that if~$p$ is an odd
prime with~$p\equiv\pm2$ modulo~$5$,
then~$F_p\equiv-1$ modulo~$p$, so
the denominator of~$\tfrac{1}{p}(F_p-1)=\frac{1}{p}
\sum_{d\smalldivides p}\mu\bigl(\tfrac{p}{d}\bigr)F_d$ is~$p$.
\end{proof}

A numerical observation is that this seems to be typical
for linear recurrence sequences, in the
following sense. An integer linear recurrence sequence may
be realizable (and, up to understanding the sign condition,
Minton's results determine when this is the case), but if
it fails to be realizable then the denominators
appearing in the associated sequence whose non-integrality
witnesses the failure of realizability are expected to be divisible by
infinitely many primes.

In a different direction,
Moss~\cite{pm} showed that realizability is preserved by
a surprising diversity of `time-changes': that is, there are
non-trivial maps~$h\colon\N\to\N$ with the property
that if~$(U_n)$ is a realizable sequence, then~$(U_{h(n)})$ is
also realizable. Examples from~\cite{pm}
with this realizability-preserving property include the monomials,
and in later work Jaidee, Moss and Ward~\cite{MR4002553} showed
that the monomials are the only polynomials with this
property, and that there are uncountably many maps with
this property.

The unexpected phenomena we wish to discuss here is
that some of these time-changes that preserve realizability
seem to `repair' the failure to be realizable for the Fibonacci
sequence---up to a finite set of primes. At this stage we understand neither
the reason for this, nor its extent.

\begin{theorem}\label{theoremFiveFibonacciSquared}
The sequence~$(F_{n^{2}})$ is not realizable, but the
sequence~$(5F_{n^{2}})$ is.
\end{theorem}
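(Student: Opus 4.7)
For non-realizability of~$(F_{n^{2}})$, I would exhibit a single failure of the Dold condition at $n=5$. Since the rank of apparition of~$5$ in the Fibonacci sequence is~$5$ itself, the identity $v_{5}(F_{5^{k}})=k$ gives $F_{25}\equiv0\pmod5$ while $F_{1}=1$, so
\[
\tfrac{1}{5}\sum_{d\smalldivides 5}\mu\bigl(\tfrac{5}{d}\bigr)F_{d^{2}}=\tfrac{F_{25}-F_{1}}{5}=\tfrac{75024}{5}\notin\Z.
\]

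For realizability of $(5F_{n^{2}})$, I would verify the Dold and sign conditions recalled in the introduction. The sign condition is easy because $F_{n^{2}}\sim\phi^{n^{2}}/\sqrt5$, so the term~$5F_{n^{2}}$ dwarfs any contribution from divisors $d<n$ as soon as $n\geq2$. For the Dold condition, fix a prime~$p$ and write $n=p^{k}m$ with $\gcd(p,m)=1$ and $k\geq1$; since $\mu(p^{j})$ vanishes for $j\geq2$, the M\"obius sum collapses to
\[
M_{n}:=\sum_{d\smalldivides n}\mu(n/d)\,5F_{d^{2}}=5\sum_{m'\smalldivides m}\mu(m/m')\bigl(F_{p^{2k}m'^{2}}-F_{p^{2k-2}m'^{2}}\bigr),
\]
and the task becomes showing $v_{p}(M_{n})\geq k$.

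For $p\ne5$, I would apply Wall's theorem that the Pisano period $P(p^{k})$ of the Fibonacci sequence modulo~$p^{k}$ divides $p^{k-1}(p^{2}-1)$. Since
\[
p^{2k}m'^{2}-p^{2k-2}m'^{2}=p^{k-1}\cdot p^{k-1}(p^{2}-1)m'^{2}
\]
is visibly divisible by $p^{k-1}(p^{2}-1)$, and hence by $P(p^{k})$, each bracket lies in $p^{k}\Z$, so $v_{p}(M_{n})\geq k$. The Pisano-period argument fails exactly at $(p,k)=(5,1)$---precisely the obstruction that prevents realizability of $(F_{n^{2}})$---but the external factor of~$5$ makes up the deficit. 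For $(p,k)=(5,1)$ divisibility by $5$ is automatic from $M_{n}=5\cdot(\cdots)$; and for $p=5$, $k\geq2$, the lifting identity $v_{5}(F_{N})=v_{5}(N)$ for $5\divides N$ gives $v_{5}(F_{5^{2k}m'^{2}})=2k$ and $v_{5}(F_{5^{2k-2}m'^{2}})=2k-2$, so each bracket has $v_{5}\geq2k-2$, and therefore $v_{5}(M_{n})\geq 2k-1\geq k$.

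The main point to be careful about is the ``off-by-one'' accounting at $p=5$: one must verify that exactly one power of~$5$ is needed to compensate the Pisano-period deficit at $(p,k)=(5,1)$, and that no other prime requires any correction. This rests on three interlocking facts: the discriminant of the Fibonacci recurrence is~$5$; $v_{5}(F_{5^{j}})=j$; and Wall's bound $P(p^{k})\divides p^{k-1}(p^{2}-1)$ holds uniformly for every prime $p\ne5$.
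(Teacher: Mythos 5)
Your proposal is correct, and although it shares the paper's overall architecture---collapse the M\"obius sum one prime at a time to differences $F_{p^{2k}m'^{2}}-F_{p^{2(k-1)}m'^{2}}$, verify $p$-divisibility via the Pisano period, and dispatch the sign condition by growth---the key lemmas differ in two places. For $p\ne5$ you use the single sharpened bound $\ell(p^{k})\divides p^{k-1}(p^{2}-1)$ (which does follow from Wall's Theorems 5--7: $\ell(p)\divides p^{2}-1$ for $p\ne5$ together with $\ell(p^{k})\divides p^{k-1}\ell(p)$) to handle all $k\ge1$ uniformly, since the index gap $p^{2k-2}(p^{2}-1)m'^{2}$ is then a multiple of the period modulo $p^{k}$. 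The paper instead splits the cases: for $k=1$ it invokes Desmond's congruence $F_{np}\equiv F_nF_p\pmod{p}$ together with $F_p^{2}\equiv1\pmod{p}$, and for $k\ge2$ it runs the periodicity argument modulo $p^{2(k-1)}$, exploiting $2(k-1)\ge k$, with the weaker bound $\ell(p^{m})\divides p^{m}(p^{2}-1)$---which has the advantage of applying verbatim to $p=5$, so the paper never needs $5$-adic valuations. Your treatment of $p=5$, $k\ge2$ via the identity $v_5(F_N)=v_5(N)$ is a genuinely different (and correct) ingredient, and the bookkeeping $v_5(M_n)\ge1+(2k-2)\ge k$ checks out; the special role of $5$ then surfaces only at $(p,k)=(5,1)$, exactly where the external factor of $5$ absorbs it, which is the right diagnosis. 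Two points to write out in a final version: the bound $\ell(p)\divides p^{2}-1$ genuinely fails at $p=5$ (where $\ell(5)=20$ does not divide $24$), so your case split must be kept and justified rather than treated as a convenience; and the sign condition deserves one explicit line, for instance Puri's criterion that an increasing non-negative sequence with $A_{2n}\ge nA_n$ has $(\mu*A)_n\ge0$, which $5F_{n^{2}}$ satisfies by Binet's formula. The non-realizability half is identical to the paper's.
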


The negative part of Theorem~\ref{theoremFiveFibonacciSquared}
may be seen from the observation
\[
\tfrac{1}{5}\sum_{d\smalldivides 5}\mu\bigl(\tfrac{5}{d}\bigr)F_{d^2}=
\tfrac{1}{5}\left(F_{25}-F_{1}\right)=
\tfrac{75024}{5},
\]
which shows that~$(F_{n^2})$ fails the Dold congruence for realizability.
The positive part of Theorem~\ref{theoremFiveFibonacciSquared}
consists of a direct proof that the sequence~$(5F_{n^2})$
satisfies the Dold conditions and the sign condition, and this
will require several steps.
Lemma~\ref{lemmaInfinitelyManyPrimeDenominators}
(strictly speaking, its proof), Theorem~\ref{theoremFiveFibonacciSquared},
and a result from~\cite{MR4002553}
together give the following description of
the behaviour of the Fibonacci sequence along
powers.

\begin{corollary}\label{corollary}
If~$j$ is odd, then
the set of primes dividing denominators
of~$\frac{1}{n}\sum_{d\smalldivides n}\mu\bigl(\tfrac{n}{d}\bigr)F_{d^j}$
for~$n\in\N$ is infinite.
If~$j$ is even, then the sequence~$(F_{n^j})$ is
not realizable, but the sequence~$(5F_{n^j})$ is.
\end{corollary}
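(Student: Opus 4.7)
The plan is to treat each of the three assertions of the corollary as a short reduction to material already in place. For the realizability of~$(5F_{n^{2k}})$ when $j=2k$ is even, I would chain Theorem~\ref{theoremFiveFibonacciSquared} with the monomial time-change theorem of Jaidee, Moss and Ward~\cite{MR4002553}, which asserts that if~$(V_n)$ is realizable then so is~$(V_{n^k})$ for every~$k\ge1$. Applying this to~$V_n=5F_{n^2}$ gives realizability of~$(5F_{(n^k)^2})=(5F_{n^{2k}})$ at once.

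For the non-realizability of~$(F_{n^{2k}})$, the plan is to exhibit the failure of the Dold congruence already at~$n=5$, imitating the case~$k=1$ displayed in the text. The relevant M\"obius combination is $\tfrac{1}{5}(F_{5^{2k}}-1)$; since the Pisano period of~$(F_n)$ modulo~$5$ is~$20$ while $5^{2k}\equiv 5\pmod{20}$ for every $k\ge1$, one has $F_{5^{2k}}\equiv F_5\equiv 0\pmod 5$, so $F_{5^{2k}}-1\equiv-1\pmod 5$ and a denominator of~$5$ survives.

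For the infinitude of prime denominators when~$j$ is odd, the strategy is to extend the proof of Lemma~\ref{lemmaInfinitelyManyPrimeDenominators}. For an odd prime $p\equiv\pm2\pmod 5$, the number~$5$ is a non-residue modulo~$p$, so the golden ratios $\phi,\psi=(1\pm\sqrt5)/2$ lie in $\mathbb{F}_{p^2}\setminus\mathbb{F}_p$ and are interchanged by the Frobenius $x\mapsto x^p$; iterating this $j$ times sends $\phi\mapsto\psi$ for odd~$j$, and Binet's formula delivers $F_{p^j}\equiv-1\pmod p$. Hence the denominator of $\tfrac{1}{p}(F_{p^j}-1)$ equals exactly~$p$, and Dirichlet's theorem supplies infinitely many such primes.

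Each reduction is essentially direct once the earlier results are in hand, and the main conceptual content is absorbed by Theorem~\ref{theoremFiveFibonacciSquared} and the cited monomial time-change theorem; the only point that requires a moment's care is the congruence $5^{2k}\equiv 5\pmod{20}$, which is what makes the same $n=5$ that witnessed the Dold failure in the base case continue to witness it uniformly for all even~$j$.
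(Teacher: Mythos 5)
Your proposal is correct, and for the main assertion---realizability of~$(5F_{n^{2k}})$---it is exactly the paper's argument: apply the monomial time-change theorem of~\cite{MR4002553} to the realizable sequence~$(5F_{n^2})$ from Theorem~\ref{theoremFiveFibonacciSquared}. Where you genuinely diverge is in the two auxiliary congruences. The paper extracts both from Desmond's lemma (Lemma~\ref{lemmaDesmond}, $F_{np}\equiv F_nF_p\pmod p$): iterating it gives~$F_{p^j}\equiv F_p^{\,j}\equiv(-1)^j\equiv-1\pmod p$ for odd~$j$ and~$p\equiv\pm2\pmod 5$, and~$F_{5^j}\equiv F_{5^{j-1}}F_5\equiv0\pmod 5$ for the non-realizability of~$(F_{n^j})$ with~$j$ even. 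You instead prove~$F_{p^j}\equiv-1\pmod p$ from scratch by tracking the Frobenius action on~$\phi,\psi\in\mathbb{F}_{p^2}\setminus\mathbb{F}_p$ (which in one stroke re-proves the fact~$F_p\equiv\bigl(\tfrac{p}{5}\bigr)\pmod p$ that the paper simply cites, and its iteration), and you obtain~$F_{5^{2k}}\equiv0\pmod5$ from the Pisano period~$20$ together with~$5^{2k}\equiv5\pmod{20}$. All of these steps check out: quadratic reciprocity does give that~$5$ is a non-residue modulo such~$p$, the Frobenius swaps~$\phi$ and~$\psi$ an odd number of times, Binet then yields~$F_{p^j}\equiv-1$, and~$5^m-5=5(5^{m-1}-1)$ is divisible by~$20$ for all~$m\ge1$. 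The paper's route is more economical and uniform, since Desmond's lemma is already proved and serves both congruences; yours is more self-contained conceptually but imports quadratic reciprocity and the arithmetic of~$\mathbb{F}_{p^2}$. The only cosmetic point is that in each case you should record that the relevant M\"obius sum at~$n=p$ (respectively~$n=5$) collapses to~$F_{p^j}-F_1$ (respectively~$F_{5^{2k}}-F_1$), which is what makes the single congruence suffice.
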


\section{Modular periods of the Fibonacci sequence}

It will be convenient to use
Dirichlet convolution notation,
so that for sequences~$f=(f_n)$ and~$g=(g_n)$ we write
\[
(f*g)_n=\sum_{d\smalldivides n}f_dg_{n/d}
\]
for all~$n\ge1$. The two conditions for realizability
of a sequence~$U=(U_n)$ can then be stated as~$(\mu*U)_n\equiv0$
modulo~$n$ and~$(\mu*U)_n\ge0$ for all~$n\ge1$.

The argument involves working modulo various natural numbers,
and we adopt the convention that a representative of an
equivalence class modulo~$m\in\N$ is always chosen
among the representatives
\[
\{0,1,\dots,m-1\}.
\]
The sequence~$(F_n)$ is automatically
periodic modulo~$m$, and we define~$\ell(m)$ to be its period.
That is,
\begin{equation*}\label{firstequationinFibonacciSquared}
\ell(m)=\min\{d\in\N\mid F_{n+d}\equiv F_n\pmod{m}\mbox{ for all }n\in\N\}.
\end{equation*}
The quantity~$\ell(m)$ is well studied; a convenient
source for the type of results we need is the paper of Wall~\cite{MR120188},
whose Theorems~5,~6 and~7 give the following.

\begin{walltheorem}[Wall~\cite{MR120188}]
If~$p$ is an odd prime, then
\begin{align}
\ell(p)&\divides p-1\mbox{ if }p\equiv\pm1\pmod{10}\label{Wall1}
\intertext{and}
\ell(p)&\divides2(p+1)\mbox{ if }p\equiv\pm3\pmod{10}.\label{Wall2}
\end{align}
If~$p$ is a prime with~$\ell(p)\neq\ell(p^2)$,
then
\begin{equation*}\label{Wall3}
\ell(p^n)=p^{n-1}\ell(p)
\end{equation*}
for all~$n\in\N$.
Moreover, if~$t$ is the largest integer
with~$\ell(p^t)=\ell(p)$, then
\begin{equation}\label{Wall4}
\ell(p^n)=p^{n-t}\ell(p)
\end{equation}
for all~$n\in\N$ with~$n\ge t$.
\end{walltheorem}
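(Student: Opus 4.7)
The plan is to reduce the statement to one about the multiplicative order of the matrix~$A=\bigl(\begin{smallmatrix}1&1\\1&0\end{smallmatrix}\bigr)$ in~$\mathrm{GL}_{2}(\Z/m\Z)$. Since~$A^{n}=\bigl(\begin{smallmatrix}F_{n+1}&F_{n}\\F_{n}&F_{n-1}\end{smallmatrix}\bigr)$, the period~$\ell(m)$ coincides with the order of~$A$ modulo~$m$. The eigenvalues of~$A$ are the roots~$\phi,\psi=(1\pm\sqrt{5})/2$ of~$x^{2}-x-1$, with~$\phi\psi=-1$ and discriminant~$5$; this drives the dichotomy between~\eqref{Wall1} and~\eqref{Wall2}.

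For~\eqref{Wall1} and~\eqref{Wall2} I split according to whether~$5$ is a square modulo the odd prime~$p\neq 5$. Quadratic reciprocity gives~$\bigl(\tfrac{5}{p}\bigr)=1$ precisely when~$p\equiv\pm 1\pmod{5}$, which for odd~$p$ is~$p\equiv\pm 1\pmod{10}$. In this case~$\phi,\psi\in\mathbb{F}_{p}^{\times}$ and Fermat's little theorem yields~$\phi^{p-1}=\psi^{p-1}=1$, so~$A^{p-1}\equiv I\pmod{p}$ and~$\ell(p)\mid p-1$. In the opposite case~$p\equiv\pm 3\pmod{10}$ the roots lie in~$\mathbb{F}_{p^{2}}\setminus\mathbb{F}_{p}$ and are conjugate under the Frobenius, so~$\phi^{p}=\psi$; combined with~$\phi\psi=-1$ this gives~$\phi^{p+1}=-1$, hence~$\phi^{2(p+1)}=1$ and similarly for~$\psi$, establishing~$\ell(p)\mid 2(p+1)$.

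For~\eqref{Wall4} I would lift exponents. Let~$t$ be the largest integer with~$\ell(p^{t})=\ell(p)=m$; this is finite, for otherwise~$A^{m}=I$ would hold in~$\Z$. Write~$A^{m}=I+p^{t}B$ with~$B\in M_{2}(\Z)$, and note that~$B\not\equiv 0\pmod{p}$: otherwise~$A^{m}\equiv I\pmod{p^{t+1}}$ would force~$\ell(p^{t+1})=m$, contradicting maximality of~$t$. Expanding
\[
A^{mp}=(I+p^{t}B)^{p}=I+p^{t+1}B+\tbinom{p}{2}p^{2t}B^{2}+\cdots+p^{tp}B^{p}
\]
by the binomial theorem, every summand beyond the first two lies in~$p^{t+2}M_{2}(\Z)$ whenever~$p$ is odd and~$t\ge 1$, so~$A^{mp}\equiv I+p^{t+1}B\pmod{p^{t+2}}$. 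Since~$B\not\equiv 0\pmod{p}$, this forces~$\ell(p^{t+1})=mp$; iterating the same expansion with the updated witness matrix yields~$\ell(p^{t+s})=mp^{s}$ for all~$s\ge 0$, giving~\eqref{Wall4}, and specializing to~$t=1$ recovers the preceding unlabelled formula.

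The main obstacle is the case~$p=2$, where~$\binom{2}{2}=1$ supplies no extra power of~$p$ and the expansion above reads~$A^{2m}=I+2^{t+1}B+2^{2t}B^{2}=I+2^{t+1}(B+2^{t-1}B^{2})$. For~$t\ge 2$ this is still enough to run the induction, and for the initial case~$t=1$ a direct computation shows~$A^{3}=I+2A$, so~$B\equiv A\pmod{2}$ and the updated witness~$B+B^{2}\equiv A+A^{2}\equiv I\pmod{2}$ is nonzero, allowing the induction to resume from~$n=2$ onward.
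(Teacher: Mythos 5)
Your proof is correct, but note that the paper itself does not prove this statement: it is quoted verbatim from Wall's 1960 paper (his Theorems~5,~6 and~7) and used as a black box, so there is no in-paper argument to compare against. What you have supplied is a clean, self-contained proof along the standard lines, and it is close in spirit to Wall's original: the identification of $\ell(m)$ with the order of $A=\bigl(\begin{smallmatrix}1&1\\1&0\end{smallmatrix}\bigr)$ in $\mathrm{GL}_2(\Z/m\Z)$, the splitting of \eqref{Wall1} and \eqref{Wall2} according to whether $5$ is a quadratic residue modulo $p$ (with the Frobenius giving $\phi^{p+1}=\phi\psi=-1$ in the inert case), and a lifting-the-exponent induction for \eqref{Wall4}. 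All the main steps check out, including the $p=2$ patch: indeed $A^3=I+2A$ and $A+A^2\equiv I\pmod 2$, so the induction restarts correctly at the modulus $4$.

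Three small points deserve an explicit line each if you write this up. First, the equivalence of $\ell(m)$ with the matrix order needs the (easy) converse direction: from $F_{n+d}\equiv F_n$ for all $n$ one reads off $F_d\equiv 0$ and $F_{d+1}\equiv F_{d-1}\equiv1$, hence $A^d\equiv I$. Second, passing from $\phi^{p-1}=\psi^{p-1}=1$ (or $\phi^{2(p+1)}=\psi^{2(p+1)}=1$) to $A^{p-1}\equiv I$ uses that $A$ is diagonalizable over $\mathbb{F}_p$ (respectively $\mathbb{F}_{p^2}$), which holds because the eigenvalues are distinct when $p\neq5$. Third, the step ``this forces $\ell(p^{t+1})=mp$'' needs the observation that $\ell(p^t)\divides\ell(p^{t+1})$, so that $\ell(p^{t+1})$ is a divisor of $mp$ that is a multiple of $m$ and, since $B\not\equiv0\pmod p$ rules out the value $m$, must equal $mp$. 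None of these is a genuine gap; they are routine completions of an otherwise sound argument.
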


From now on in this section~$p$ will always denote a prime,
and~$k$ an integer with~$k\ge2$.

Clearly~\eqref{Wall1} and~\eqref{Wall2}
show that~$\ell(p)\divides 2(p^2-1)$ for an
odd prime~$p$, but a little more is true.
We claim that
\begin{equation}\label{Wall5}
\ell(p)\divides p^2-1
\end{equation}
for any prime~$p$.
For~$p=2$ it is easy to check that~$\ell(p)=3$.
For an odd prime~$p\equiv\pm1$ modulo~$10$,~\eqref{Wall1}
shows that~$\ell(p)\divides p^2-1$.
For an odd prime~$p\equiv\pm3$ modulo~$10$,~$p-1$
is even so~$p^2-1$ is a multiple of~$2(p+1)$,
and hence~$\ell(p)\divides p^2-1$ by~\eqref{Wall2}.

By definition,~$F_{n+\ell(p^d)}\equiv F_n$ modulo~$p^d$
and~$F_{n+\ell(p^{d+1})}\equiv F_n$ modulo~$p^{d+1}$
for any~$d\in\N$, so~$F_{n+\ell(p^{d+1})}\equiv F_n$ modulo~$p^d$
and hence
\begin{equation}\label{Wall6}
\ell(p^d)\le\ell(p^{d+1})
\end{equation}
for any~$d\in\N$.

\begin{lemma}\label{lemmaDefiningsofn1}
For~$n\in\N$ there is
some~$s=s(n)$ with~$0\le s<n$
such that~$\ell(p^n)=p^s\ell(p)$.
\end{lemma}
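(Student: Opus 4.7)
The plan is to deduce this directly from Wall's theorem via a short case analysis. Trivially~$s(1)=0$ works, since~$\ell(p^{1})=\ell(p)=p^{0}\ell(p)$ and~$0<1$, so I may assume~$n\ge 2$.

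For~$n\ge2$ I would split on the behaviour of the sequence~$(\ell(p^{d}))_{d\in\N}$. If~$\ell(p^{d})=\ell(p)$ for every~$d\in\N$ (the Wall--Sun--Sun situation, which is not known to be impossible for the Fibonacci sequence), then~$s(n)=0$ works for all~$n$. Otherwise, by the monotonicity~\eqref{Wall6} the set~$\{d\in\N\mid\ell(p^{d})=\ell(p)\}$ is a finite initial segment of~$\N$, so there is a well-defined largest integer~$t\ge1$ with~$\ell(p^{t})=\ell(p)$; set~$s(n)=0$ for~$2\le n\le t$, and~$s(n)=n-t$ for~$n\ge t$, reading the latter directly off Wall's identity~\eqref{Wall4}.

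It then only remains to verify~$0\le s(n)<n$ in each branch. For~$s(n)=0$ this is immediate. For~$s(n)=n-t$ with~$n\ge t\ge1$, one has~$0\le n-t\le n-1<n$. I foresee no real obstacle here: the lemma is essentially a repackaging of Wall's theorem, exposing the parameter~$s(n)$ as the~$p$-adic order of~$\ell(p^{n})/\ell(p)$, and the only substantive observation is that~$t\ge1$, which is what forces the strict upper bound~$s(n)<n$.
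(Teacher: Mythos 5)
Your proof is correct and follows essentially the same route as the paper: reduce to Wall's theorem, split on whether $\ell(p^{d})=\ell(p)$ for all $d$, and otherwise use the monotonicity~\eqref{Wall6} to see that the largest $t$ with $\ell(p^{t})=\ell(p)$ exists and controls everything via~\eqref{Wall4}. The only (harmless) organizational difference is that the paper treats the case $\ell(p)\neq\ell(p^{2})$ separately via the identity $\ell(p^{n})=p^{n-1}\ell(p)$, whereas you absorb it into the general case as $t=1$.
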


\begin{proof}
If~$\ell(p)\neq\ell(p^2)$ then~\eqref{Wall4}
gives~$s=n-1$.

Suppose therefore that~$\ell(p)=\ell(p^2)$.
If~$\ell(p)=\ell(p^n)$ for all~$n\in\N$,
then we may set~$s=0$.
If~$\ell(p)\neq\ell(p^n)$ for some~$n\in\N$,
then let~$t\in\N$ be the largest integer
with~$\ell(p^t)=\ell(p)$.
By~\eqref{Wall6} we then have~$\ell(p)=\ell(p^j)$
for~$j=1,\dots,t$, and so we can use~\eqref{Wall4}
to define~$s=0$ if~$n\le t$ and~$s=n-t$ if~$n>t$.
\end{proof}

By Lemma~\ref{lemmaDefiningsofn1} we have
\[
\frac{p^n(p^2-1)}{\ell(p^n)}=\frac{p^{n-s}(p^2-1)}{\ell(p)},
\]
so~\eqref{Wall5} shows that
\begin{equation}\label{Wall7}
\ell(p^n)\divides p^n(p^2-1)
\end{equation}
for any~$n\in\N$.

We now define sequences~$u=(u_n)$
and~$v=(v_n)$ by
\begin{align}
u&=(F_n\pmod{p^{2k}})\nonumber
\intertext{and}
v&=(F_n\pmod{p^{2(k-1)}}).\label{Wall9}
\end{align}

\begin{lemma}\label{lemmaPeopleNeedToKnowYoureIndependent}
For any integer~$c\ge0$ we have
\begin{equation*}
F_{p^{2k}+c}\equiv F_{p^{2(k-1)}+c}
\end{equation*}
modulo~$p^k$.
\end{lemma}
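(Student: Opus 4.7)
The plan is to reduce the congruence to a statement about the period~$\ell(p^k)$ of the Fibonacci sequence modulo~$p^k$, and then apply the divisibility~\eqref{Wall7}.

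First I would rewrite the claim as~$F_{m+D}\equiv F_{m}$ modulo~$p^k$, where~$m=p^{2(k-1)}+c$ and
\[
D=p^{2k}-p^{2(k-1)}=p^{2(k-1)}(p^2-1).
\]
By definition of~$\ell(p^k)$, this congruence holds for every~$m\in\N$ provided~$\ell(p^k)\divides D$. So the whole statement reduces to showing that~$\ell(p^k)\divides p^{2(k-1)}(p^2-1)$.

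Next I would apply~\eqref{Wall7} with~$n=k$, which gives~$\ell(p^k)\divides p^k(p^2-1)$. Since we are in the range~$k\ge2$ we have~$2(k-1)\ge k$, so~$p^k\divides p^{2(k-1)}$, and hence
\[
\ell(p^k)\divides p^k(p^2-1)\divides p^{2(k-1)}(p^2-1)=D,
\]
which is exactly what is needed. Substituting~$m=p^{2(k-1)}+c$ then gives~$F_{p^{2k}+c}\equiv F_{p^{2(k-1)}+c}$ modulo~$p^k$, as required.

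There is no real obstacle here; the only thing to notice is that the statement of the lemma has been arranged so that the shift~$D$ between the two arguments of~$F$ is a multiple of~$p^k(p^2-1)$ because~$k\ge2$. The hypothesis~$k\ge2$ is used in precisely one place, namely the inequality~$2(k-1)\ge k$ that lets~\eqref{Wall7} applied at exponent~$k$ deliver the divisibility at the larger exponent~$2(k-1)$.
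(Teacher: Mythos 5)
Your proof is correct. The reduction to showing that the index shift $D=p^{2k}-p^{2(k-1)}=p^{2(k-1)}(p^2-1)$ is a multiple of a Fibonacci period is exactly the right idea, and the divisibility chain $\ell(p^k)\divides p^k(p^2-1)\divides p^{2(k-1)}(p^2-1)$ is valid, with the hypothesis $k\ge2$ used precisely where you say it is.

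The paper takes the same underlying idea but organizes it differently: it applies \eqref{Wall7} at the exponent $2(k-1)$ rather than at $k$, showing that $\ell(p^{2(k-1)})$ divides $D$, so that the two Fibonacci values already agree modulo $p^{2(k-1)}$; the inequality $2(k-1)\ge k$ is then used at the end to descend from the modulus $p^{2(k-1)}$ to $p^k$ (via the auxiliary sequences $u$ and $v$). Your version is more streamlined for the lemma as stated, since you work with the period of the target modulus $p^k$ directly and never need the auxiliary sequences. What the paper's arrangement buys is a strictly stronger conclusion as a byproduct: the congruence actually holds modulo $p^{2(k-1)}$, and the authors exploit this immediately after the proof to deduce \eqref{Wall10}, namely that for $k\ge3$ the congruence holds modulo $p^{k+1}$. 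Your argument, applied verbatim, only delivers the modulus $p^k$, so if you wanted \eqref{Wall10} you would have to rerun it with $\ell(p^{k+1})\divides p^{k+1}(p^2-1)\divides p^{2(k-1)}(p^2-1)$, using $2(k-1)\ge k+1$ for $k\ge3$ --- which works, but is worth noting as the one thing your shorter route leaves on the table.
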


\begin{proof}
By definition,
\begin{align*}
F_{p^{2k}+c}&\equiv u_{p^{2k}+c}\pmod{p^{2k}}
\intertext{and}
F_{p^{2(k-1)}+c}&\equiv v_{p^{2(k-1)}+c}\pmod{p^{2(k-1)}}
\end{align*}
For any integer~$j\ge0$ we have
\[
v_{p^{2(k-1)}+c}=v_{p^2(k-1)+c+j\ell(p^{2(k-1)})}.
\]
By~\eqref{Wall7} we may set~$j=\frac{p^{2(k-1)(p^2-1)}}{\ell(p^{2(k-1)})}$,
so
\[
v_{p^{2(k-1)}+c+j\ell(p^{2(k-1)})}
=
v_{p^{2(k-1)}+c+p^{2(k-1)}(p^2-1)}
=
v_{p^{2k}+c}.
\]
It follows that~$v_{p^{2(k-1)}+c}=v_{p^{2k}+c}$
and so
\[
F_{p^{2(k-1)}+c}\equiv v_{p^{2k}+c}\pmod{p^{2(k-1)}}.
\]
Clearly~$p^{2(k-1)}\divides u_n-v_n$ for all~$n\in\N$,
so~$p^k\divides u_n-v_n$ for all~$n\in\N$ since~$k\ge2$.
In particular,
\begin{align*}
p^k&\divides u_{p^{2k}+c}-v_{p^{2k}+c}
\intertext{and hence}
p^k&\divides u_{p^{2k}+c}-v_{p^{2(k-1)}+c}.
\end{align*}
Thus~$F_{p^{2k}+c}\equiv F_{p^{2(k-1)}+c}$ modulo~$p^k$
as required.
\end{proof}

Notice that in the proof above we
saw that~$p^{2(k-1)}\divides u_n-v_n$,
so for~$k\ge3$ we have~$p^{k+1}\divides u_n-v_n$.
It follows that
\begin{equation}\label{Wall10}
F_{2^{2k}+c}\equiv F_{2^{2(k-1)}+c}\pmod{2^{k+1}}
\end{equation}
for any~$k\ge3$.

\section{Properties of the sequence $(5F_{n^2})$}

In this section~$p$ again denotes a prime,~$k$
a positive integer,~$\phi=(\phi_n)$ denotes
the sequence defined by~$\phi_n=5F_{n^2}$
for all~$n\in\N$, and~$L=(L_n)$
denotes the Lucas sequence.
Since~$L$ is a realizable sequence
it satisfies the Dold congruences, and so
in particular we have
\begin{equation}\label{Wall11}
L_p\equiv1\pmod{p}.
\end{equation}
The next result appeared
as an exercise due to Desmond~\cite{desmond1970},
with a solution using an earlier result of
Ruggles~\cite{ruggles}.

\begin{lemma}[Desmond]\label{lemmaDesmond}
For a non-negative integer~$n$ we have~$F_{np}\equiv F_nF_p$
modulo~$p$.
\end{lemma}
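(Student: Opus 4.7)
The plan is to argue in the ring $R=\Z[\phi]=\Z[x]/(x^2-x-1)$, where $\phi$ denotes the class of $x$. A straightforward induction from $\phi^2=\phi+1$ gives the identity $\phi^m=F_m\phi+F_{m-1}$ for every $m\ge1$ (with $F_0=0$), and the case $n=0$ of the lemma is trivial.

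For $n\ge1$, the idea is to compute $\phi^{np}$ in two ways. On the one hand $\phi^{np}=F_{np}\phi+F_{np-1}$ by the same identity. On the other hand, $\phi^{np}=(\phi^n)^p=(F_n\phi+F_{n-1})^p$, and since $p\mid\binom{p}{k}$ for $1\le k\le p-1$, reduction modulo $pR$ yields the freshman's dream
\[
(F_n\phi+F_{n-1})^p\equiv F_n^p\phi^p+F_{n-1}^p\pmod{pR}.
\]
Substituting $\phi^p=F_p\phi+F_{p-1}$ and collecting the coefficient of $\phi$ then gives
\[
\phi^{np}\equiv F_pF_n^p\,\phi+\bigl(F_{p-1}F_n^p+F_{n-1}^p\bigr)\pmod{pR}.
\]
Because $\{1,\phi\}$ is a $\Z$-basis for $R$, it descends to an $\mathbb{F}_p$-basis of $R/pR$, so matching the coefficients of $\phi$ in the two expressions for $\phi^{np}$ yields $F_{np}\equiv F_pF_n^p\pmod{p}$. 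Fermat's little theorem then replaces $F_n^p$ by $F_n$, delivering the claimed congruence.

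There is essentially no obstacle here: the argument is uniform in the prime $p$, going through even when $R/pR$ is not a field (for example $R/5R$, where $x^2-x-1\equiv(x-3)^2\pmod{5}$, has a nilpotent). All that is required is that $\{1,\phi\}$ spans $R/pR$ freely as an $\mathbb{F}_p$-module, which is automatic. This approach is a self-contained repackaging of the underlying polynomial identity
\[
F_{np}=\sum_{k=0}^{p}\binom{p}{k}F_n^{k}F_{n-1}^{p-k}F_k
\]
behind Ruggles' argument, with the Frobenius/freshman's dream doing the bookkeeping once and for all.
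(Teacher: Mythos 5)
Your argument is correct, but it takes a genuinely different route from the paper. The paper proves the lemma by induction on $n$: it invokes the addition formula $F_{r+s}=F_rL_s+(-1)^{s+1}F_{r-s}$ to get $F_{(m+1)p}=F_{mp}L_p+F_{(m-1)p}$, then uses the congruence $L_p\equiv1\pmod{p}$ (which the paper has already extracted from the realizability of the Lucas sequence, its equation~\eqref{Wall11}) together with the inductive hypothesis. You instead work in $R=\Z[x]/(x^2-x-1)$ and apply the Frobenius/freshman's-dream identity to $(\phi^n)^p$, matching coefficients of $\phi$ against $\phi^{np}=F_{np}\phi+F_{np-1}$ and finishing with Fermat's little theorem. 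Each step of yours checks out: $\{1,\phi\}$ is a free $\Z$-basis of $R$ since $x^2-x-1$ is monic, so it descends to a free $\mathbb{F}_p$-basis of $R/pR$, and the binomial argument needs only that $R/pR$ is a commutative ring of characteristic $p$, so the cases $p=2$ and $p=5$ (where $R/pR$ is not a field) cause no trouble, exactly as you note. What the paper's route buys is that it stays entirely inside elementary Fibonacci--Lucas identities and recycles a congruence it already needed; what your route buys is self-containment (no appeal to $L_p\equiv1$ or to the addition formula) and immediate generalizability: replacing $x^2-x-1$ by $x^2-Px+Q$ gives the analogous statement $U_{np}(P,Q)\equiv U_n(P,Q)\,U_p(P,Q)\pmod{p}$ for any Lucas sequence, which is directly relevant to the Conjecture stated at the end of the paper, whereas the paper's inductive proof would need the corresponding companion-sequence congruence $V_p\equiv P\pmod p$ as input.
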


\begin{proof}
The case~$p=2$ or~$n\le1$ is clear, so assume that~$p$ is odd
and~$n\ge2$,
and assume the statement holds for all~$n\le m$
for some~$m\ge2$.
Recall that~$F_{r+s}=F_rL_s+(-1)^{s+1}F_{r-s}$
(see, for example, Ribenboim~\cite[Eq.~(2.8)]{MR1761897}).
It follows that
\[
F_{mp+p}=F_{mp}L_p+(-1)^{p+1}F_{mp-p}=F_{mp}L_p+F_{(m-1)p},
\]
and so
\[
F_{(m+1)p}\equiv F_{mp}+F_{(m-1)p}\pmod{p}
\]
by~\eqref{Wall11}.
The inductive assumption gives
\[
F_{(m+1)p}\equiv F_mF_p+F_{m-1}F_p\pmod{p},
\]
and then the relation~$F_mF_p+F_{m-1}F_p=F_{m+1}F_p$
completes the proof by induction.
\end{proof}

By Lemma~\ref{lemmaDesmond} we have
\[
F_{np^2}\equiv F_{np}F_p\equiv F_n(F_p)^2\pmod{p}.
\]
Since~$F_p^2\equiv\bigl(\frac{p}{5}\bigr)\equiv1$
modulo~$p$ if~$p\neq5$, we deduce that
\begin{equation*}\label{Wall12}
F_{np^2}\equiv F_n\pmod{p}
\end{equation*}
for~$p\neq5$.
It follows that
\begin{equation}\label{Wall13}
5F_{np^2}\equiv5F_n\pmod{p}
\end{equation}
for any prime~$p$,
since it is trivial for~$p=5$.

\begin{lemma}\label{lemmaWasPatsTh3.4}
For non-negative integers~$n$ and~$k$
we have
\[
5F_{np^{2k}}\equiv5F_{np^{2(k-1)}}\pmod{p^k}.
\]
\end{lemma}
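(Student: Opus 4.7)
The case $k = 1$ is exactly~\eqref{Wall13}, so I can reduce to $k \ge 2$, the regime in which Lemma~\ref{lemmaPeopleNeedToKnowYoureIndependent} applies. My plan is to upgrade the one-block congruence $F_{p^{2k}+c} \equiv F_{p^{2(k-1)}+c} \pmod{p^k}$ to the $n$-block statement $F_{np^{2k}} \equiv F_{np^{2(k-1)}} \pmod{p^k}$ by passing through the matrix form of the Fibonacci recursion; the factor of $5$ will then be cosmetic, and in particular no case distinction between $p = 5$ and other primes is needed once $k \ge 2$.

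Concretely, I will introduce the matrix $M = \begin{pmatrix} 1 & 1 \\ 1 & 0 \end{pmatrix}$, whose $m$-th power has $F_{m+1}, F_m, F_m, F_{m-1}$ as its four entries. Applying Lemma~\ref{lemmaPeopleNeedToKnowYoureIndependent} with $c = 0$ and $c = 1$ delivers $F_{p^{2k}} \equiv F_{p^{2(k-1)}} \pmod{p^k}$ and $F_{p^{2k}+1} \equiv F_{p^{2(k-1)}+1} \pmod{p^k}$, and combining these via $F_{m-1} = F_{m+1} - F_m$ will also give $F_{p^{2k}-1} \equiv F_{p^{2(k-1)}-1} \pmod{p^k}$. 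Together these three congruences amount to the matrix identity
\[
M^{p^{2k}} \equiv M^{p^{2(k-1)}} \pmod{p^k}.
\]

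Since integer-matrix congruences modulo $p^k$ are preserved under multiplication (write $M^{p^{2k}} = M^{p^{2(k-1)}} + p^k C$ for an integer matrix $C$ and expand), raising to the $n$-th power will produce $M^{np^{2k}} \equiv M^{np^{2(k-1)}} \pmod{p^k}$, and reading off the $(1,2)$-entry gives $F_{np^{2k}} \equiv F_{np^{2(k-1)}} \pmod{p^k}$, from which the lemma follows on multiplying by~$5$. The one subtle spot will be that the $(2,2)$-entry $F_{p^{2k}-1}$ corresponds to the case $c = -1$ in Lemma~\ref{lemmaPeopleNeedToKnowYoureIndependent}, which lies outside the stated range $c \ge 0$; this is repaired by the three-term Fibonacci recurrence, and appears to be the only non-automatic step in the argument.
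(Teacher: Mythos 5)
Your proof is correct, but it takes a genuinely different route from the paper's. The paper deduces the $n$-block statement by applying Lemma~\ref{lemmaPeopleNeedToKnowYoureIndependent} with the $n$-dependent shift $c=(n-1)p^{2k}$ and then invoking the periodicity of the Fibonacci sequence modulo $p^{2(k-1)}$ a second time (via~\eqref{Wall7}) to slide the index from $p^{2(k-1)}+(n-1)p^{2k}$ down to $np^{2(k-1)}$, finishing with the observation that $2(k-1)\ge k$ when $k>1$. You instead invoke Lemma~\ref{lemmaPeopleNeedToKnowYoureIndependent} only at $c=0$ and $c=1$, obtain the $(2,2)$-entry from the recurrence $F_{m-1}=F_{m+1}-F_m$ (correctly sidestepping the restriction $c\ge0$), and then propagate via the semigroup identity $(B+p^kC)^n\equiv B^n\pmod{p^k}$ for integer matrices, which is valid despite non-commutativity since every cross term carries a factor $p^kC$. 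What your approach buys is that the bootstrapping from one block to $n$ blocks becomes purely algebraic and needs no further appeal to Fibonacci periods; what the paper's approach buys is that it stays entirely within the scalar periodicity framework already set up in Section~2 and never introduces the matrix formalism. Both arguments ultimately rest on Lemma~\ref{lemmaPeopleNeedToKnowYoureIndependent}, and both correctly note that the factor of $5$ is only doing work in the $k=1$ case via~\eqref{Wall13}.
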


\begin{proof}
For~$k=1$ this follows from~\eqref{Wall13}.
If~$k>1$, then Lemma~\ref{lemmaPeopleNeedToKnowYoureIndependent}
with~$c=(n-1)p^{2k}$ gives
\begin{equation}\label{Wall14}
F_{np^{2k}}\equiv F_{p^{2(k-1)}+(n-1)p^{2k}}\pmod{p^k}.
\end{equation}
By the definition~\eqref{Wall9} we have
\[
F_{p^{2(k-1)}+(n-1)p^{2k}}
\equiv
v_{p^{2(k-1)}+(n-1)p^{2k}}\pmod{p^{2(k-1)}},
\]
and for~$j\ge0$ we have
\[
v_{np^{2(k-1)}}
=
v_{p^{2(k-1)}+(n-1)p^{2(k-1)}+j\ell(p^{2(k-1)})}.
\]
Taking~$j=\frac{(n-1)p^{2(k-1)}(p^2-1)}{\ell(p^{2(k-1)})}$,
which is integral by~\eqref{Wall7}, this gives
\[
v_{p^{2(k-1)}+(n-1)p^{2(k-1)}+j\ell(p^{2(k-1)})}
=
v_{p^{2(k-1)}+(n-1)p^{2k}}
\]
so~$v_{p^{2(k-1)}+(n-1)p^{2k}}=v_{np^{2(k-1)}}$.
It follows that
\begin{align*}
F_{p^{2(k-1)}+(n-1)p^{2k}}
&\equiv
F_{np^{2(k-1)}}\pmod{p^{2(k-1)}}.
\intertext{Since~$k>1$, this gives}
F_{p^{2(k-1)}+(n-1)p^{2k}}
&\equiv
F_{np^{2(k-1)}}\pmod{p^{k}}
\intertext{and hence}
F_{np^{2k}}
&\equiv
F_{np^{2(k-1)}}\pmod{p^{k}}
\end{align*}
by~\eqref{Wall14}.
\end{proof}

A similar argument using~\eqref{Wall10}
shows that if~$n$ is a positive integer
and~$k\ge3$ then
\begin{equation*}\label{Wall17}
F_{n2^{2k}}\equiv F_{n2^{2(k-1)}}\pmod{2^{k+1}}.
\end{equation*}

The modular arguments thus far are of course aimed
at establishing the Dold condition. The sign
condition is satisfied because of the rapid
rate of growth in the sequence, which is more
than sufficient by the following remark
of Puri~\cite{yash}.

\begin{lemma}\label{lemmaYashGrowthRateThing}
If~$(A_n)$ is an increasing sequence of non-negative
real numbers with~$A_{2n}\ge nA_n$ for all~$n\in\N$,
then~$(\mu* A)_n\ge0$
for all~$n\in\N$.
\end{lemma}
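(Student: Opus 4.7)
The plan is to bound $(\mu*A)_n$ from below by isolating the leading term,
\[
(\mu*A)_n=A_n+\sum_{d\smalldivides n,\,d<n}\mu(n/d)A_d,
\]
and using the rapid growth condition $A_{2m}\ge mA_m$ to force $A_n$ to dominate the remaining sum. The base case~$n=1$ is immediate from~$A_1\ge0$, so I focus on~$n\ge2$.

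Let~$p$ denote the smallest prime factor of~$n$. Every proper divisor~$d$ of~$n$ satisfies~$d\le n/p$, so monotonicity of~$A$ gives~$A_d\le A_{n/p}$. Moreover, the proper divisors of~$n$ form a subset of~$\{1,2,\dots,n/p\}$, so there are at most~$n/p$ of them. Combined with~$|\mu(n/d)|\le1$, this yields the crude bound
\[
\sum_{d\smalldivides n,\,d<n}\mu(n/d)A_d\ge-\tfrac{n}{p}A_{n/p}.
\]

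To match this, I would apply the hypothesis with~$m=n/p$ to obtain~$A_{2n/p}\ge(n/p)A_{n/p}$, and then use monotonicity together with~$2n/p\le n$ (which holds since~$p\ge2$) to conclude~$A_n\ge A_{2n/p}\ge(n/p)A_{n/p}$. Feeding this into the previous display gives~$(\mu*A)_n\ge0$ at once. I do not anticipate any real obstacle: the key observation is simply that the growth hypothesis makes~$A_n$ so much larger than every~$A_d$ with~$d<n$ that the loosest possible term-by-term estimate on the M\"obius sum is already enough.
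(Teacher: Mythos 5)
Your proof is correct and follows essentially the same strategy as the paper's: isolate the leading term $A_n$, bound the remaining M\"obius sum crudely by (number of proper divisors) $\times$ (largest $A_d$ over proper divisors), and beat that with the growth hypothesis. The only cosmetic difference is that you bound the proper divisors by $n/p$ with $p$ the least prime factor, which lets you avoid the paper's split into even and odd cases.
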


\begin{proof}
In the even case, we have
\begin{align*}
(\mu*A)_{2n}
=
\sum_{d\smalldivides2n}\mu(2n/d)A_d
\ge
A_{2n}-\sum_{k=1}^{n}A_k
\ge
A_{2n}-nA_n\ge0,
\end{align*}
since the largest divisor of~$2n$ is~$n$.
Similarly, in the odd case we have
\begin{align*}
(\mu*A)_{2n+1}
\ge
A_{2n+1}-\sum_{k=1}^{n}A_k\ge A_{2n}-nA_n\ge0,
\end{align*}
since the largest divisor of~$2n+1$ is smaller than~$n$,
proving the lemma.
\end{proof}

\begin{proof}[Proof of the positive part of Theorem~\ref{theoremFiveFibonacciSquared}.]
We wish to show that~$n\divides(\mu*\phi)_n$ and~$(\mu*\phi)_n\ge0$
for all~$n\in\N$.
For~$n=1$ this is clear. If~$n=p^k$ then
\[
(\mu*\phi)_n
=
\sum_{d\smalldivides p^k}\mu(d)\phi_{p^k/d}
=
\phi_{p^k}-\phi_{p^{k-1}}
=
5F_{p^{2k}}-5F_{p^{2(k-1)}},
\]
which is clearly non-negative, and
Lemma~\ref{lemmaWasPatsTh3.4}
shows that it
is divisible by~$n$.

For the general case we will work with one
prime at a time using Lemma~\ref{lemmaWasPatsTh3.4}.
Suppose that~$n=p_1^{k_1}\cdots p_m^{k_m}$
with~$m\ge2$,~$k_1,\dots,k_m\in\N$, and distinct primes~$p_1,\dots,p_m$.
Select one of these primes~$p_i$, and
to reduce the notational complexity write~$p^k=p_i^{k_i}$.
Writing~$s=n/p_i^{k_i}$,
we have
\[
(\mu*\phi)_n
=
\sum_{d\smalldivides p^ks}\mu(d)\phi_{p^ks/d}
=
\sum_{d\smalldivides s}
\bigl(\phi_{p^ks/d}-\phi_{p^{k-1}s/d}\bigr)
=
\sum_{d\smalldivides s}\bigl(5F_{(s/d)^2p^{2k}}-5F_{(s/d)^2p^{2(k-1)}}\bigr).
\]
Lemma~\ref{lemmaWasPatsTh3.4} therefore shows that~$p_i^{k_i}\divides(\mu*\phi)_n$,
and, by using this for each prime dividing~$n$, we deduce
that~$n\divides(\mu*\phi_n)$ as required.

For the sign condition we will use Lemma~\ref{lemmaYashGrowthRateThing}
and Binet's formula.
Clearly~$\phi$ is an increasing sequence.
Writing~$\alpha=(1+\sqrt{5})/2$
and~$\beta=(1-\sqrt{5})/2$
we have
\[
\phi_{2n}=5F_{4n^2}
=
\sqrt{5}
\bigl(\alpha^{2n^2}+\beta^{2n^2}\bigr)
\bigl(\alpha^{n^2}+\beta^{n^2}\bigr)
\bigl(\alpha^{n^2}-\beta^{n^2}\bigr)
\]
and
\[
n\phi_n=n\sqrt{5}\bigl(\alpha^{n^2}-\beta^{n^2}\bigr).
\]
Thus to show the growth condition used in
Lemma~\ref{lemmaYashGrowthRateThing}
it is enough to show that
\[
\bigl(\alpha^{2n^2}+\beta^{2n^2}\bigr)
\bigl(\alpha^{n^2}+\beta^{n^2}\bigr)\ge n
\]
for~$n\in\N$.
Clearly
\[
\bigl(\alpha^{2n^2}+\beta^{2n^2}\bigr)
\bigl(\alpha^{n^2}+\beta^{n^2}\bigr)
>
G(n)=\alpha^{2n^2}\bigl(\alpha^{n^2}-1\bigr)
\]
for all~$n\in\N$.
We check that~$G(1)=\alpha>1$ and for~$n\ge2$
we have
\[
G(n)>\alpha^{2n^2}(\alpha-1)=\alpha^{2n^2-1}>n.
\]
Thus~$\phi_{2n}\ge n\phi_n$ for all~$n\in\N$,
completing the proof.
\end{proof}

\begin{proof}[Proof of Corollary~\ref{corollary}.]
Assume first that~$j$ is odd, and
recall that if~$p\equiv\pm2$
modulo~$5$ is an odd prime, then~$F_p\equiv-1$ modulo~$p$
(as in the proof of Lemma~\ref{lemmaInfinitelyManyPrimeDenominators}).
By Lemma~\ref{lemmaDesmond} it follows that~$F_{p^j}\equiv-1$
modulo~$p$, so
the denominator of~$\tfrac{1}{p}(F_p-1)=\frac{1}{p}
\sum_{d\smalldivides p}\mu\bigl(\tfrac{p}{d}\bigr)F_d$ is~$p$.

For~$j$ even, Lemma~\ref{lemmaDesmond} shows that~$F_{5^j}\equiv F_{25}\equiv0$
modulo~$5$, so~$(1/5)\bigl(F_{5^j}-F_{1}\bigr)$ has denominator~$5$,
showing that~$(F_{n^j})$ is not realizable.

Finally, by~\cite[Th.~5]{MR4002553} we know that
for any~$k\in\N$ the map~$h(n)=n^k$ preserves
realizability. That is, if~$(U_n)$ is a realizable
sequence then~$(U_{n^k})$ is also. Thus
the positive part of Theorem~\ref{theoremFiveFibonacciSquared}
shows that~$(5F_{n^{2k}})$ is realizable
for any~$k\in\N$.
\end{proof}

\section{Remarks}

\noindent(1) The correspondence between a pair~$(X,T)$, denoting
a map~$T\colon X\to X$ with the property that~$\fix_n(T)<\infty$ for
all~$n\ge1$, and the associated sequence~$(\fix_n(T))$
or~$(\orbit_n(T))$ is `functorial' with regard to many
natural operations (we refer to work of Pakapongpun
and Ward~\cite{MR2486259,MR3194906} for an explanation of this cryptic comment,
and for results in this direction).
Unfortunately the time-changes studied in~\cite{MR4002553}
do not seem to have any such property. For example,
we do not have any reasonable way to start with
a pair~$(X,T)$ and set-theoretically `construct'
another pair~$(X',T')$ with the property that~$\fix_n(T')=\fix_{n^2}(T)$
for all~$n\ge1$.
We have even less ability---indeed, have no starting
point---to `construct' some reasonable
pair~$(X,T)$ with~$\fix_n(T)=5F_{n^2}$ for all~$n\ge1$,
particularly if the permutation of a countable set
implicitly constructed in the proof is viewed as
unreasonable. A general result due to Windsor~\cite{MR2422026}
shows that there must be a~$C^{\infty}$ map of
the~$2$-torus with this property, but
we know nothing more meaningful about such a map
than the fact that it must exist.

\noindent(2) For integers~$P,Q$ we define the Lucas
sequence~$(U_n(P,Q))$ and companion Lucas sequence~$(V_n(P,Q))$
by
\begin{align*}
\frac{x}{1-Px+Qx^2}&=\sum_{n=0}^{\infty}U_n(P,Q)x^n
\intertext{and}
\frac{2-Px}{1-Px+Qx^2}&=\sum_{n=0}^{\infty}V_n(P,Q)x^n.
\end{align*}
Binet's formulas show that the sequence~$(V_n(P,Q))$
always satisfies
the Dold condition, but that~$(U_n(P,Q))$ can only
do so if the discriminant~$P^2-4Q=\pm1$.
Thus the sequence
\[
\bigl(U_n(\pm(2k+1),k^2+k)\bigr)
\]
satisfies the Dold condition for any~$k\in\Z$.
Theorem~\ref{theoremFiveFibonacciSquared}
states that~$(5U_{n^2}(1,-1))$ is realizable,
and numerical experiments suggest the following.

\begin{conjecture}
For~$P,Q\in\Z$ the sequence~$\bigl((P^2-4Q)U_{n^2}(P,Q)\bigr)$
satisfies the Dold condition.
\end{conjecture}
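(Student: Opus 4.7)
The plan is to mimic the proof of the positive part of Theorem~\ref{theoremFiveFibonacciSquared}, generalizing each Fibonacci-specific ingredient to arbitrary Lucas sequences. Let~$\alpha,\beta$ be the roots of~$x^2-Px+Q$, so~$D=(\alpha-\beta)^2$ and Binet's formula gives
\[
DU_n(P,Q)=(\alpha-\beta)(\alpha^n-\beta^n)=V_{n+1}(P,Q)-QV_{n-1}(P,Q).
\]
Multiplying by~$D$ clears the~$\alpha-\beta$ denominator from Binet, producing a~$\Z$-linear combination of the companion Lucas sequence that is well-adapted to~$p$-adic analysis; as in the Fibonacci case the discriminant factor is precisely what is needed to absorb modular obstructions at primes~$p\divides D$.

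The central target is the analog of Lemma~\ref{lemmaWasPatsTh3.4}: for every prime~$p$ and positive integers~$n,k$,
\[
DU_{np^{2k}}(P,Q)\equiv DU_{np^{2(k-1)}}(P,Q)\pmod{p^k}.
\]
Once this is in hand, the prime-by-prime factoring at the end of the proof of Theorem~\ref{theoremFiveFibonacciSquared} applies verbatim and yields~$n\divides(\mu*\phi)_n$ for~$\phi_n=DU_{n^2}(P,Q)$.

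To establish the congruence, one generalizes the results of Section~2 and Lemma~\ref{lemmaDesmond}. For~$p\nmid 2QD$, the classical Lucas--Euler congruence~$U_p(P,Q)\equiv\bigl(\tfrac{D}{p}\bigr)\pmod{p}$, combined with the multiplicative identity~$U_{np}(P,Q)\equiv U_n(P,Q)U_p(P,Q)\pmod{p}$, gives the base case~$k=1$ exactly as in Section~3. For~$k\ge2$ one needs a Wall-type period bound~$\ell_{P,Q}(p^k)\divides p^k(p^2-1)$, where~$\ell_{P,Q}(m)$ is the period of~$(U_n(P,Q))$ modulo~$m$; this follows from~$\ell_{P,Q}(p)\divides p-\bigl(\tfrac{D}{p}\bigr)\divides p^2-1$ together with the~$p$-adic lifting~$\ell_{P,Q}(p^n)=p^{n-t}\ell_{P,Q}(p)$ for~$n$ large (the Wall--Sun-Sun structure). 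With this bound, the displacement~$p^{2k}-p^{2(k-1)}=p^{2(k-1)}(p^2-1)$ is a multiple of~$\ell_{P,Q}(p^{2(k-1)})$, and the argument of Lemma~\ref{lemmaPeopleNeedToKnowYoureIndependent} lifts the base case to higher prime powers.

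The main obstacle will be controlling the exceptional primes~$p=2$ and~$p\divides QD$, where the standard Lucas-sequence congruences above do not apply directly. The primes~$p\divides D$ are the critical case: the factor~$D$ provides extra powers of~$p$ equal to its~$p$-adic valuation, but the residual congruence still requires dedicated analysis using the degenerate (double-root) structure of~$(U_n(P,Q))$ modulo~$p$. Primes dividing~$Q$ reduce the recurrence to~$U_n\equiv P^{n-1}\pmod{p}$, which Fermat handles cleanly when~$p\nmid P$ but requires separate treatment when~$p$ divides both~$P$ and~$Q$; the prime~$p=2$ should be handled by an analog of~\eqref{Wall10} yielding an extra factor of~$2$. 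Verifying that the single factor of~$D$ always suffices to overcome these exceptional-prime obstructions, and that no further finite correction is required, is the essential difficulty in converting this sketch into a proof.
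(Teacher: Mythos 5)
The statement you are proving is left as an open conjecture in the paper: the authors state only that ``numerical experiments suggest'' it, so there is no proof here to compare yours against, and the question is whether your sketch closes the gap. It does not. Your overall strategy---generalize Lemma~\ref{lemmaDesmond} via $V_p(P,Q)\equiv P\pmod p$ and the Euler criterion $U_p(P,Q)\equiv\bigl(\tfrac{D}{p}\bigr)\pmod p$, then lift to prime powers with a Wall-type period bound as in Lemma~\ref{lemmaPeopleNeedToKnowYoureIndependent} and Lemma~\ref{lemmaWasPatsTh3.4}---is the natural one, and for primes $p\nmid 2QD$ it should go through essentially as you describe. But the entire content of the conjecture is concentrated at the exceptional primes, and those are exactly the cases you defer with phrases like ``requires dedicated analysis'' and ``requires separate treatment.'' A proof proposal whose final sentence concedes that verifying the critical cases ``is the essential difficulty in converting this sketch into a proof'' is a statement of the problem, not a solution to it.

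To be concrete about where the sketch would fail as written. (i) For ramified primes $p\divides D$ the roots coincide modulo the prime above $p$, so $U_n\equiv n(P/2)^{n-1}$ and the period $\ell_{P,Q}(p)$ acquires a factor of $p$; the divisibility $\ell_{P,Q}(p)\divides p-\bigl(\tfrac{D}{p}\bigr)\divides p^2-1$ that you invoke is then false. (Indeed even in the Fibonacci case the paper's claim~\eqref{Wall5} fails at the ramified prime $p=5$, where $\ell(5)=20\nmid 24$; the argument is rescued only because the extra factor $p^n$ in~\eqref{Wall7} absorbs one power of $p$. For general $D$, whose $p$-adic valuation can be arbitrarily large, you must prove that the single factor $D$ supplies exactly the needed compensation at every level $p^k$, and nothing in your sketch does this.) (ii) For $p\divides Q$ the sequence $(U_n(P,Q))$ modulo $p^m$ is in general only eventually periodic, not purely periodic (for $(P,Q)=(1,2)$ one gets $1,1,3,1,3,\dots$ modulo $4$), so the shift-by-a-multiple-of-the-period device underlying Lemma~\ref{lemmaPeopleNeedToKnowYoureIndependent} has no analogue and a genuinely different mechanism is required; your reduction to $U_n\equiv P^{n-1}$ handles only the residue modulo $p$, not modulo $p^k$. (iii) Degenerate parameters ($D=0$, $Q=0$, or $\alpha/\beta$ a root of unity, e.g.\ $(P,Q)=(1,1)$) fall outside the Binet-plus-period framework altogether and need separate, if easy, arguments. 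Until (i)--(iii) are carried out the conjecture remains exactly as open as the paper leaves it.
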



\providecommand{\bysame}{\leavevmode\hbox to3em{\hrulefill}\thinspace}
\providecommand{\MR}{\relax\ifhmode\unskip\space\fi MR }
\providecommand{\MRhref}[2]{%
  \href{http://www.ams.org/mathscinet-getitem?mr=#1}{#2}
}
\providecommand{\href}[2]{#2}

\end{document}